
\documentclass[10pt]{article}
\usepackage{amsmath,amssymb,enumerate} 

\usepackage{amsthm}

\usepackage{tikz}
\usetikzlibrary{arrows}
\usetikzlibrary{positioning}

\newtheorem{theorem}{Theorem}

\newtheorem{proposition}{Proposition}
\newtheorem{corollary}{Corollary}

\def\ZZ{\mathbb{Z}}
\def\ZZ1{\mathbf{Z}_{\geq 1}}

\def\RR{\mathbb{R}}

\def\bx{\mathbf{x}}

\def\by{\mathbf{y}}

\title{Balanced flows for transshipment problems
}

\author{
Vladimir Gurvich\thanks{
HSE University, Moscow Russia; e-mail:
vgurvich@hse.ru ; vladimir.gurvich@gmail.com}
}

\begin{document}

\maketitle

\abstract{
A transshipment problem $(G, d, \lambda)$  is modeled
by a directed graph  $G = (V, E)$
with weighted vertices (nodes) $d = (d_v \mid v \in V)$  and
weighted directed edges (arcs) $\lambda = (\lambda_e \mid e \in E)$
interpreted as follows:
$G$  is a communication or transportation network, for example, a pipeline;
each arc  $e \in E$  is a one-way communication line, road or pipe
of capacity  $\lambda_e$, while
every vertex  $v \in V$  is a node of production
$d_v > 0$, consumption $d_v < 0$, or just transition $d_v = 0$.
A non-negative flow  $x = (x_e \mid e  \in E)$  is called
{\em weakly feasible} if 
for each  $v \in V$  the algebraic sum of flows,
over all directed edges incident to $v$,  equals $d_v$,
or shorter, if  $A_G x = d$,   where  $A_G$  is
the vertex-edge incidence matrix of $G$.
A weakly feasible flow  $x$  is called {\em feasible}
if  $x_e \leq  \lambda_e$  for all  $e \in E$.
We consider weakly feasible but not necessarily feasible flows, that is,
inequalities  $x_e > \lambda_e$  are allowed.
However, such an excess is viewed as unwanted (dangerous) and so
we minimize the excess ratio vector
$r = (r_e = x_e  / \lambda_e  \mid  e \in E)$  lexicographically.
More precisely, first, we look for weakly feasible flows
minimizing the maximum of  $r_e$  over all $e \in E$;
among all such flows we look for those that
minimize the second largest coordinate of $r$, etc.
Clearly, in $|E|$  such steps obtain a unique lexmin vector  $r$.
We will show that the corresponding {\em balanced flow} is also unique and
represents the {\em lexmin solution} for problem  $(G, d, \lambda)$.
Using the Gale-Hoffman inequalities, we construct it in polynomial time,
provided   vectors  $d$  and  $\lambda$  are rational.
\newline
For symmetric digraphs the problem was solved
by Gurvich and Gvishiani in 1984.
Here we extend this result to arbitrary directed graphs.
Furthermore, we simplify the algorithm and proofs
applying the classic criterion of existence of
a feasible flow for $(G, d, \lambda)$
obtained by Gale and Hoffman in late 1950-s.
\newline
MSC classes:
90C25, 
94C15, 
94C99. 
}

\newpage

\section{Lexmin Linear Programming}
\label{s0}
For a positive integer $m$   define by  $[m]=\{1,...,m\}$
the set of positive integers up to $m$, by 
$\RR_{\geq 0}^m$  
the set of nonnegative  $m$-vectors,  and by $\RR_{> 0}^m$  
the $m$-vectors whose all coordinates are strictly positive.

Let $K \subseteq \RR_{\geq 0}^m$ be a convex polyhedral region.
We can view a solution $\bx\in K$ as a possible operational state of a physical system,
where the components $\bx=(x_1,...,x_m)$ are the corresponding levels of consumption
of certain resources, $i\in [m]$. We shall also consider  positive thresholds $\lambda_i>0$, $i\in [m]$,
describing the desirable levels of resource consumptions, and
define $\lambda=(\lambda_1,...,\lambda_m)\in \RR_{> 0}^m$. To a feasible vector $\bx\in K$ we associate
\[
r(\bx) ~=~ \left(\frac{x_i}{\lambda_i}\mid i\in [m]\right)
\]
in which
the ratios $r_i(\bx)=\frac{x_i}{\lambda_i}$, $i\in [m]$, 
describe the utilization levels of resources at the operating state $\bx$.
The lower these utilization levels are, the higher the chance that our system can resist
random fluctuations in resource needs, that is,
the higher the reliability of smooth/uninterrupted operations.

We shall consider an equitable lexmin optimization problem for such systems:
we look for feasible solutions $\bx\in K$ for which the largest component of $r(\bx)$ is as small as possible,
and among those the second largest component of $r(\bx)$ is as small as possible, etc.
To arrive to a precise definition, let us associate to a vector $\bx\in K$ and real $z\in \RR$ the \emph{level set size}
\[
L(z,\bx) ~=~ \left| \{i\in [m]\mid r_i(\bx) ~\geq~ z\}  \right|.
\]
For two vectors $\bx,\by\in K$ we say the $\bx$ \emph{precedes} $\by$ ($\bx\prec \by$) 
if there exists a real threshold $z\in\RR$ such that $L(z,\bx)<L(z,\by)$ and $L(z',\bx)=L(z',\by)$ for all $z'>z$.
Note first that this relation is a pre-order, which is "almost" a complete order:

\begin{proposition}
If for two distinct vectors $\bx,\by\in K$ we have neither $\bx\prec \by$ nor $\by\prec \bx$
then the components of $r(\bx)$ are a permutations of the components of $r(\by)$.
Furthermore, in such a case we have both $\frac12\bx+\frac12\by \prec \bx$ and $\frac12\bx+\frac12\by \prec \by$.
\end{proposition}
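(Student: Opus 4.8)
The plan is to translate the relation $\prec$ into a lexicographic comparison of the descending rearrangements of the ratio vectors, and then to combine convexity of $K$ with the linearity of $r$. The key structural observation is that $z\mapsto L(z,\bx)$ is a non-increasing, left-continuous, integer-valued step function whose only breakpoints lie in the finite set $\{r_i(\bx)\mid i\in[m]\}$; it equals $0$ above $\max_i r_i(\bx)$, and the size of its jump at a value $t$ is the multiplicity of $t$ among the $r_i(\bx)$, so $L(\cdot,\bx)$ and the multiset of coordinates of $r(\bx)$ determine one another. Inspecting the breakpoints from the top down, one checks that $\bx\prec\by$ holds if and only if the descending rearrangement of $r(\bx)$ is lexicographically smaller than that of $r(\by)$. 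Consequently ``neither $\bx\prec\by$ nor $\by\prec\bx$'' is exactly the statement that these two descending rearrangements coincide, i.e. that $r(\bx)$ and $r(\by)$ are permutations of one another with a common sorted profile $s=(s_1\ge\cdots\ge s_m)$, which is the first assertion.

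To make the first assertion self-contained I argue by contraposition. If $r(\bx)$ and $r(\by)$ are not permutations of one another, then $L(\cdot,\bx)\not\equiv L(\cdot,\by)$; let $z^\ast$ be the largest breakpoint at which the two functions disagree (it exists, as there are finitely many breakpoints and both functions vanish above the global maximum). By maximality and the step structure, $L(z',\bx)=L(z',\by)$ for all $z'>z^\ast$ while $L(z^\ast,\bx)\ne L(z^\ast,\by)$, so whichever function is the larger at $z^\ast$ furnishes, by the definition of $\prec$, a threshold witnessing that one of $\bx,\by$ precedes the other, contradicting the hypothesis.

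For the second assertion put $\mathbf m=\tfrac12\bx+\tfrac12\by$, which lies in $K$ by convexity, and use that $r$ is linear, so $r(\mathbf m)=\tfrac12 r(\bx)+\tfrac12 r(\by)$ coordinatewise. By symmetry it suffices to prove $\mathbf m\prec\bx$, i.e. that the descending rearrangement of $r(\mathbf m)$ is lexicographically strictly below $s$. First, for every $k$ the top-$k$ sum $S_k(\bx)=\max_{|I|=k}\sum_{i\in I}r_i(\bx)$ is a maximum of linear functions, hence convex, so $S_k(\mathbf m)\le\tfrac12 S_k(\bx)+\tfrac12 S_k(\by)=S_k(\bx)$ for all $k$, with equality at $k=m$; thus $r(\mathbf m)$ is majorized by $r(\bx)$, forcing its descending rearrangement to be lexicographically $\le s$. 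Second, fix any strictly convex $\phi$ and set $\Phi(\bx)=\sum_i\phi\!\left(r_i(\bx)\right)$, a symmetric functional depending only on the multiset of ratios; strict convexity gives $\Phi(\mathbf m)<\tfrac12\Phi(\bx)+\tfrac12\Phi(\by)=\Phi(\bx)$, the strictness coming from $\bx\ne\by$, equivalently $r_i(\bx)\ne r_i(\by)$ for some $i$ since $\lambda_i>0$. Were the descending rearrangement of $r(\mathbf m)$ equal to $s$, then $r(\mathbf m)$ and $r(\bx)$ would share a multiset and hence a value of $\Phi$, a contradiction; so the rearrangements differ, and with the majorization this yields $\mathbf m\prec\bx$.

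I expect the strictness in the last step to be the main obstacle. Extracting it from a single top-$k$ sum, or from a single tail function $\sum_i\max(r_i-c,0)$, cannot work: these are only piecewise linear, so $S_k(\mathbf m)=S_k(\bx)$ may hold for every fixed $k$ despite $\bx\ne\by$ (equality at level $k$ only means $\bx$ and $\by$ admit a common maximizing index set of that size). Passing to a genuinely strictly convex $\phi$ is what detects the coordinatewise discrepancy $r(\bx)\ne r(\by)$ rather than only the sorted aggregates, and pairing it with the majorization converts ``the midpoint is at least as balanced'' into ``strictly more balanced.''
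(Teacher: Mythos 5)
Your proof is correct. Note that the paper itself offers no argument here --- its proof reads, in full, ``This is an easy consequence of the definitions'' --- so there is no authorial route to compare against; what you have written is a genuine completion of a gap the paper leaves open. Your strategy is sound: (i) the reformulation of $\prec$ as lexicographic comparison of the descending rearrangements of $r(\bx)$ and $r(\by)$ is verified correctly via the left-continuous step structure of $L(\cdot,\bx)$, and the ``largest disagreement point'' argument for the first assertion is airtight; (ii) for the second assertion, the combination of majorization (convexity of the top-$k$ sums $S_k$, which forces the sorted profile of the midpoint to be lexicographically $\leq s$) with a strictly convex separating functional $\Phi$ (which rules out equality of the sorted profiles, using $r(\bx)\neq r(\by)$ coordinatewise because $\lambda_i>0$) is exactly the right division of labor. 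Your closing remark is also well taken: a direct threshold-by-threshold comparison of $L(z,\tfrac12\bx+\tfrac12\by)$ with $L(z,\bx)$ at the largest coordinate where $r(\bx)$ and $r(\by)$ differ does not immediately yield strict inequality (the discrepancy may fall entirely on the $\by$ side at that level), so some device like the strictly convex $\Phi$, or an induction descending through the levels, really is needed; piecewise-linear aggregates alone cannot certify strictness. The only cosmetic point worth tightening is to state explicitly that $\Phi(\bx)=\Phi(\by)$ because $\Phi$ is a symmetric function of the multiset of ratios and the first assertion makes these multisets equal --- you use this but only implicitly in the chain $\tfrac12\Phi(\bx)+\tfrac12\Phi(\by)=\Phi(\bx)$.
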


\begin{proof}
This is an easy consequence of the definitions. 
\end{proof}

It is also easily seen that it implies the uniqueness of such a lexmin solution:

\begin{corollary}
For every convex set $K \subseteq \RR_\geq^m$ and positive threshold vector $\lambda\in \RR_{> 0}^m$
there exists a unique vector $\bx^*\in K$ such that $\bx^*\prec \by$ for all $\by\in K$, $\by\neq \bx^*$.
\end{corollary}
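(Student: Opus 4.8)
The plan is to split the claim into \emph{uniqueness} and \emph{existence}, observing that the first is essentially immediate from the Proposition while the second needs a short compactness argument. The underlying fact I would record first is that $\bx\prec\by$ is exactly the lexicographic comparison of the coordinate multisets of $r(\bx)$ and $r(\by)$ written in decreasing order: since $L(z,\bx)$ depends only on that multiset, ``$L(z',\bx)=L(z',\by)$ for all $z'>z$ together with $L(z,\bx)<L(z,\by)$'' is precisely the statement that the sorted-decreasing profiles agree on their top entries until $\bx$'s first becomes strictly smaller. In particular $\prec$ is irreflexive and asymmetric. Hence if two distinct vectors $\bx^*,\by^*\in K$ both preceded every other element of $K$, then applying the defining property of each to the other would force $\bx^*\prec\by^*$ and $\by^*\prec\bx^*$ simultaneously, which is impossible. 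So at most one such vector exists.

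For existence I would run an iterated minimization. Let $S_k(\bx)$ denote the sum of the $k$ largest coordinates of $r(\bx)$, with $S_0\equiv 0$; each $S_k$ is convex and continuous, and $S_1(\bx)=\max_i x_i/\lambda_i$. Set $K_0=K$ and, for $k=1,\dots,m$, define $a_k=\inf_{\bx\in K_{k-1}}\bigl(S_k(\bx)-S_{k-1}(\bx)\bigr)$ and $K_k=\{\bx\in K_{k-1}\mid S_k(\bx)-S_{k-1}(\bx)=a_k\}$. On $K_{k-1}$ the value $S_{k-1}$ is already pinned to the constant $a_1+\cdots+a_{k-1}$, so $S_k-S_{k-1}$ is a convex function there and each $K_k$ is closed and convex. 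The decisive point for attainment is that the sublevel set $\{\bx\mid S_1(\bx)\le t\}$ is contained in the box $\prod_i[0,t\lambda_i]$; here $K\subseteq\RR_{\geq 0}^m$ and $\lambda\in\RR_{>0}^m$ are used in an essential way. Thus $K_1$ is a nonempty compact convex set, all later minimizations take place inside this box, every infimum is attained, and each $K_k$ is nonempty and compact. By construction each point of $K_m$ realizes the lexicographically smallest decreasing rearrangement of $r(\cdot)$ over $K$.

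It remains to collapse $K_m$ to a single point, which is exactly what the convex-combination part of the Proposition provides. If $\bx,\by\in K_m$ were distinct, they would share the same sorted ratio vector, so $L(z,\bx)=L(z,\by)$ for all $z$ and neither precedes the other; the Proposition then yields $\tfrac12\bx+\tfrac12\by\prec\bx$, and since $\tfrac12\bx+\tfrac12\by\in K$ by convexity this contradicts the lex-minimality defining $K_m$. Hence $K_m=\{\bx^*\}$. Finally, for any $\by\in K$ with $\by\neq\bx^*$, its sorted ratio vector cannot be lex-smaller than that of $\bx^*$ by minimality and cannot be equal (otherwise $\by\in K_m$), so it is strictly larger, i.e. $\bx^*\prec\by$, as required.

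I expect the only genuinely delicate step to be attainment of the infima; I would emphasize that it is handled entirely by the box-compactness of the sublevel sets of $S_1$, using $K\subseteq\RR_{\geq 0}^m$ and the strict positivity of $\lambda$. The passage from ``the lex-minimal set $K_m$'' to ``a single point'' is where the second assertion of the Proposition does the real work, and the convexity of the partial sums $S_k$ is the technical device that keeps every intermediate feasible set $K_k$ convex and closed.
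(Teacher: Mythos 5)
Your proof is correct, and it is considerably more complete than the paper's own treatment, which merely asserts that the Corollary ``is easily seen'' to follow from the Proposition and offers no argument for existence at all. Your uniqueness step --- two distinct lex-minimal points would have equal sorted ratio vectors, hence neither precedes the other, so by the second assertion of the Proposition their midpoint (which lies in $K$ by convexity) strictly precedes both, contradicting minimality --- is exactly the route the paper intends. What you genuinely add is the attainment argument: the explicit identification of $\prec$ with lexicographic comparison of the decreasing rearrangements of $r(\cdot)$, the iterated minimization of the convex partial sums $S_k$ (sum of the $k$ largest ratios), and the observation that the sublevel sets of $S_1$ are trapped in the box $\prod_i[0,t\lambda_i]$ because $K\subseteq\RR_{\geq 0}^m$ and $\lambda\in\RR_{>0}^m$. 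This is essentially the Nace--Orlin scheme that the paper cites but does not reproduce, and it is the right way to make the existence claim rigorous. One caveat: your compactness step silently requires $K$ to be nonempty and closed, which the Corollary's literal hypothesis ``every convex set'' does not supply (for $K=\{x\in\RR^1_{>0}\mid x>1\}$ the infimum is not attained and the statement fails); you are implicitly relying on the section's standing assumption that $K$ is a convex \emph{polyhedral} region, and it would be worth saying so explicitly where you claim that $K_1$ is compact and nonempty.
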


The algorithmic problem of computing $\bx^*$
(or an approximation of it) was considered in many special cases,
frequently motivated by multi-objective optimization, 
\cite{BR91,KLS92,KO05,Lus98,MO92,Ogr97,Ogr97a,OS02,OS03,OS06,OT03}.
An equivalent formulation was considered by Nace and Orlin in \cite{NO07}
who showed that $\bx^*$ can be computed by solving a series of linear programs,
whenever $K$ is the set of nonnegative feasible solutions 
to a system of linear equalities and inequalities with rational coefficients.

In what follows, we focus on transshipment problems and show that $\bx^*$
can be derived by a much simpler polynomial time algorithm,
based on the characterization of feasibility of such problems 
by  Gale and Hoffman in \cite{Gal57,Hof60}.
This algorithm implies also the uniqueness of the lexmin solution 
independently of the above arguments.
 
For symmetric directed graphs 
(or in other words, for undirected graphs) 
the above problem was solved much earlier, in \cite{GG84}.
The present work can be viewed as an extension of those results 
to arbitrary directed graphs. 

\section{Lexmin Transshipment Problems}
Let  $G = (V, E)$  be a directed graph (digraph)
with weighted vertices (nodes)  $d = (d_v \mid v \in V)$  and
with capacities on the directed edges (arcs) $\lambda = (\lambda_e \mid e \in E)$.
Let us index sets  $V = \{v_1, \ldots, v_n\}$  and  $E = \{e_1, \ldots, e_m\}$
arbitrarily to make $d$  and $\lambda$  vectors.
We interpret  $G$  as a communication or transportation network,
(for example, a pipeline) in which every vertex  $v \in V$
is a node of production if $d_v > 0$,
or consumption if $d_v < 0$, or
just transition  $d_v = 0$  of some material
(in particular, information), while
every edge  $e=(u,v) \in E$  is a one-way
communication line, road, or pipe of capacity  $\lambda_e > 0$.
We call vertex $u$ the \emph{tail} of arc $e=(u,v)$ and $v$ is called its \emph{head}.
We assume that the process is stationary, that is,
the same amount of material
is produced, consumed, and transported in a unit time.
A triplet $(G, d, \lambda)$  defines a {\em transhipment problem}.
We will search a solution of this problem among
nonnegative {\em flow} vectors $x = (x_e \mid e \in E)$, where
$x_e \geq 0$  is interpreted
as the amount of material (information) transported along
arc  $e$  in a unit time.
It will be convenient to introduce the vector of ratios
$r = (r_e = x_e / \lambda_e \mid e \in E)$.

\subsection*{Feasible, weakly feasible, and balanced flows}
We will call flow  $x$ {\em weakly feasible}
(or a {\em weak solution}) to problem $(G, d, \lambda)$
if for each node  $v \in V$  the algebraic sum of flows  $x_e$
for all directed edges  $e$  incident to $v$  equals  $d_v$.
To formulate this more accurately
recall the definition of the {\em node-arc incidence}
$|V| \times |E|$  {\em matrix} $A_G$ of  $G$, in which
the entries  $inc(v,e)$  take values  $\pm 1$  and  $0$
according to the following definition:

\begin{equation}
  \mbox{inc}(v,e)=\left\{
  \begin{aligned}
    +1&, &&\text{if node} \;\; v\ \;\; \text{is the tail of} \;\; e;\\
    -1&, &&\text{if node} \;\; v\ \;\; \text{is the head of} \;\; e;\\
     0&, &&\text{if} \;\; v \;\; \text{and} \;\;  e \text{\;\; are not incident}.
  \end{aligned}\right.
\end{equation}

Thus, a flow  $x$ is weakly feasible for
$(G, d, \lambda)$  if and only if   

\begin{equation}
A_G \; x = d.
\end{equation}

Clearly, problem $(G, d, \lambda)$ may  have a weakly feasible solution
only if  $\sum_{v \in V} d_v = 0$.
We will always assume this equality in the sequel.

\medskip

Furthermore, a flow  $x$  is called {\em feasible}
(or a {\em solution})  to problem $(G, d, \lambda)$
if it is weakly feasible and  $x \leq \lambda$,
that is, $0 \leq  x_e \leq \lambda_e$  for all  $e \in E$.

\medskip

In this paper we study weakly feasible flows
that are not necessarily feasible;
in other words, inequality  $x_e > \lambda_e$,
or equivalently, $r_e > 1$, may hold for some
(maybe, for all) arcs $e \in E$.
Yet, we view large ratios  $r_e$  as unwanted
(potential sources of jams or accidents)
and try to reduce them by minimizing
$r = (r_e = x_e / \lambda_e \mid e \in E)$  lexicographically,
as defined in Section \ref{s0}:
Consider all weakly feasible flows
minimizing  $r^0 = \max (r_e \mid e \in E)$;
among them consider those that minimize the second largest ratio  $r^1$
(equality  $r^0 = r^1$  may hold);  etc.
In  $|E|$  iterations we obtain a unique lexmin vector  $r^*$.
However, this does not yet imply the uniqueness of
the corresponding lexmin flow vector
(which will be proven in Section \ref{s2}; see also Section \ref{s0}).
Meanwhile, we shall assume that there may be several such vectors and
call each of them a {\em balanced} flow, 
or a {\em lexmin solution} to problem  $(G, d, \lambda)$).
By definition, a balanced flow is weakly feasible
and it is feasible whenever a feasible flow exists.

Note that in the latter case the lexicographical minimization is
not that well motivated.
Still, it makes sense, e.g., for a telecommunication network, where
the operator needs to define a routing with respect to a given
traffic demand such that
the network load is fairly distributed among the network links;
see, e.g., \cite{NO07}  for  more details and applications.

\medskip

We will construct a balanced flow
by a simple iterative algorithm
based on the Gale \cite{Gal57} and Hoffman \cite{Hof60}
criterion of solvability of transshipment problems 
and on an arbitrary polynomial time maximum flow algorithm.
Thus,  our algorithm is polynomial too 
whenever the input vectors $d$  and  $\lambda$  are rational.

\subsection*{Cuts}
A cut  $C$  in a digraph  $G = (V,E)$  is defined
as a proper partition of  $V$;
in other words, it is an ordered pair of node-sets
$C = (V', V'')$  such that  $V' \neq  \emptyset$, $V'' \neq  \emptyset$,
$V' \cap V'' = \emptyset$, and $V' \cup V'' = V$.

\smallskip

We say that an arc $e  = (v',v'')$  is in  $C$
if  $v' \in V'$  and  $v'' \in V''$.
Obviously, each cut $C$  is uniquely defined by the set  $E_C$  of its arcs,
but not every subset  $E' \subseteq E$  is associated with a cut.
For each cut  $C$ we introduce the following real numbers
{\em deficiency} $d_C$, {\em capacity} $\lambda_C$, {{\em ratio} $r_C$, and {\em flow} $x_C$,
as follows:

\begin{equation}
\label{e-cut}
\begin{aligned}
d_C &= \sum_{v \in V'} d_v = - \sum_{v \in V''} d_v,\\
\lambda_C &= \sum_{e \in E_C} \lambda_e,\\
r_C &= \frac{d_C}{\lambda_C}, \mbox{ and}\\
x_C &= \sum_{e \in E_C} x_e.
\end{aligned}
\end{equation}

Consider a problem $(G, d, \lambda)$ with positive capacities, and a cut $C$.
Obviously, $\lambda_C = 0$  if and only if  $E_C = \emptyset$.
A cut  $C$  is called  {\em fatal}  if
$d_c > 0$  while  $\lambda_C = 0$;
in this case  $r_C = +\infty$.
A cut $C$ is called {\em deficient} if  $d_C  > \lambda_C$,
or equivalently,  $r_C > 1$.
A cut realizing  $\max (r_C \mid C \in G)$  is called {\em critical}.
Note that such a cut always exists, unlike
a fatal or a deficient cut.
Obviously, a fatal cut  ($r_C = +\infty$)
is deficient ($r_C > 1$).
A critical cut is also deficient
whenever a latter exists.
Note finally that $d_C$  is positive
for all three considered types of cuts.

\medskip

In Section \ref{s1} it will be shown that
problem  $(G, d, \lambda)$  is weakly solvable
(resp., solvable)  if and only if it has
no fatal (resp., deficient) cut.

\subsection*{How to obtain the balanced flow? A plan}
First we will give a polynomial algorithm
constructing a critical cut $C^0$.
Obviously,  $\max (r_e \mid e \in E) = r^0 \geq  r_{C^0}$.
Yet, we hope that equality  $r^0 = r_{C^0}$
will hold for the balanced flow  $x^*$.
Obviously, we must load all edges of cut  $C^0$  uniformly:
$x^*_e = r_{C^0} \lambda_{C^0}$  for all  $e \in E_{C^0}$.
(This is why flow  $x^*$  is  called ``balanced".)
Then,  flow  $x^*_{C^0}$   through  $C^0$
will be equal to  $\sum_{e \in C^0} r_{C^0} \lambda_e = d_{C^0}$,
as required, and  $r_e = r_{C^0}$  for all  $e \in C^0$.
Otherwise, whenever edges of  $C^0$  are loaded not uniformly,
$r_e$  will exceed  $r_{C^0}$  for some $e \in C^0$,
contradicting the assumption that $x^*$  is the balanced flow.

Since  $x^*_e$  are uniquely determined  for  $e  \in E_{C^0}$,
we can delete all these edges from  digraph $G$ and
recompute vector  $d$  accordingly.
Thus, the  original problem  $(G, d, \lambda) = (G^0, d^0, \lambda^0)$
is reduced to  $(G^1, d^1, \lambda^1)$.
Repeat the same procedure for this new problem,
getting a critical cut  $C^1$  and  $r^1 = r_{C^1}$; etc.
In at most  $|E|$  steps flow  $x^*$  will be determined
for all edges of digraph  $G$.

Clearly, inequality  $r^0 = r_{C^0} \geq  r_{C^1} = r^1$
would justify this iterative procedure.
Indeed, it implies monotonicity:  $r^0 \geq r^1 \geq \ldots$,
which in its turn immediately implies that  $x^*$
is the (unique) balanced flow.
However, if we could have  $r_{C^1} > r_{C^0}$  then
it may be not optimal to fix the uniform flow through  $C^0$.

Inequality  $r_{C^0} \geq  r_{C^1}$  could be verified
by a direct computation, as it was done in \cite{GG84}
for the case of symmetric digraphs.
Yet Endre Boros noticed that 
it is simpler to derive this inequality from
the Gale-Hoffman criterion of solvability  \cite{Gal57,Hof60}.
This plan is realized in the next two sections.

\section{Gale-Hoffman's inequalities; a criterion
of solvability for transshipment problems}
\label{s1}
Clearly,  problem  $(G, d, \lambda)$  has no weak solution
if it contains a fatal cut.
Otherwise, if there are no fatal cuts,
in the next section we will construct the balanced flow,
which is weakly feasible by definition.
Thus, the following criterion holds.

\begin{proposition}
Problem  $(G, d, \lambda)$  is weakly solvable
if and only if it contains no fatal cut.
\qed
\end{proposition}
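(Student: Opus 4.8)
The plan is to prove both directions of the equivalence, one of which is already granted by the surrounding text. The forward direction—that the existence of a fatal cut precludes weak solvability—is the short half: if $C = (V', V'')$ is fatal, then $d_C > 0$ while $\lambda_C = 0$, the latter meaning $E_C = \emptyset$, i.e. no arc crosses from $V'$ to $V''$. I would argue by summing the conservation equations $A_G x = d$ over the nodes in $V'$. The algebraic sum $\sum_{v \in V'} d_v = d_C$ must equal the net flow out of $V'$, which is the total flow on arcs leaving $V'$ minus the total flow on arcs entering $V'$. Since no arc leaves $V'$ (that is what $E_C = \emptyset$ says), the net outflow is at most zero (it is $-\sum$ of nonnegative flows on entering arcs), so $d_C \le 0$, contradicting $d_C > 0$. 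Hence no weakly feasible flow can exist.

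For the converse—no fatal cut implies weak solvability—I would lean on the constructive promise made just above the statement: whenever there are no fatal cuts, the next section builds a balanced flow, which is weakly feasible by definition. So the cleanest route is simply to invoke that forthcoming construction, exactly as the excerpt signals. If instead one wanted a self-contained argument at this point, the natural alternative is to reduce to a standard max-flow/min-cut or Gale--Hoffman feasibility statement: introduce a super-source $s$ joined to every production node $v$ (with $d_v > 0$) by an arc of capacity $d_v$, a super-sink $t$ fed by every consumption node, and ask whether all source arcs can be saturated; the max-flow--min-cut theorem then translates ``no saturating flow'' into the existence of a separating cut in the augmented graph, which one unwinds back into a fatal cut of the original $G$.

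The only genuine subtlety, and the step I would watch most carefully, is the sign bookkeeping tying $d_C$ to the net flow across the cut. The incidence convention here assigns $+1$ to the tail and $-1$ to the head, so I must confirm that summing the row equations of $A_G x = d$ over $v \in V'$ produces a cancellation of every arc internal to $V'$ (contributing $+1$ and $-1$), leaving precisely (flow out of $V'$) $-$ (flow into $V'$) on the left and $d_C$ on the right. Getting this orientation right is what makes ``$E_C = \emptyset \Rightarrow$ net outflow $\le 0$'' come out correctly; with the opposite convention the inequality would flip. Once that identity is in hand, the forward direction is immediate and the converse is a citation to the construction of the balanced flow, so no further heavy machinery is needed here.
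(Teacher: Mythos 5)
Your proposal is correct and follows essentially the same route as the paper: the paper dismisses the forward direction with ``clearly'' (you spell out the summation of $A_G x = d$ over $V'$, with the correct tail-$(+1)$/head-$(-1)$ sign convention), and for the converse it likewise appeals to the construction of the balanced flow in the next section. No gaps.
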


In late  1950-s  Gale \cite{Gal57} and Hoffman \cite{Hof60}
obtained a simple criterion of weak solvability.
It is based on a reduction of the problem
to a two-pole network problem and
applying the {\em max flow -  min cut theorem} to it;
see \cite{FF56} and also \cite{FF62}.

To each transshipment problem  $(G, d, \lambda)$
we assign a two-pole network problem
$(G, d, \lambda; s,t) = (G', \lambda')$  as follows. Define
$V^+ = \{v \in V \mid d_v > 0\}$,
$V^- = \{v \in V \mid d_v < 0\}$,
and  $D = \sum_{v \in V^+} d_v = -\sum_{v \in V^-} d_v$.
Add to  $G$  two new nodes  $s$  and  $t$
and also a directed edge
\begin{itemize}
\item[]
$e_u = (s,u)$
of capacity  $\lambda_{e_u} = d_u$  from  $s$  to each  $u \in V^+$;
\item[]
$e_w = (w,t)$  of capacity  $\lambda_{e_w} = - d_w$
from each  $w \in V^-$  to  $t$.
\end{itemize}
By definition, all these capacities are positive.

The obtained two-pole network $(G, d, \lambda; s,t) = (G', \lambda')$
has two special cuts
$C_s : \{s\} - (V \cup \{t\})$  and
$C_t : (\{s\} \cup V) - \{t\}$
of capacity  $D$   each.
Clearly, the set of all other cuts in  $(G', \lambda')$
is in one-to-one correspondence with  the set of cuts of  $(G, d, \lambda)$.
Furthermore, it is easy to verify that
$$\lambda'_{C'} = D + (\lambda_C - d_C)$$
for every cut  $C$  of  $(G, d, \lambda)$   and  the corresponding
cut  $C'$  of  $(G', \lambda') $.
Thus, the capacity of a minimum cut in
the two-pole network  $(G', \lambda')$
(which is equal to the value
of a maximum flow from  $s$  to $t$  in it) is  
$$c(G,d,\lambda;s,t) = \min (D,  D + (\lambda_C - d_C) \mid C \in G).$$
By definition, it takes value  $D$  if and only if
$\lambda_C \geq d_C$  for every cut  $C$  in  $G$.
Hence, $(G, d, \lambda)$  is solvable if and only if the capacity
$c(G,d,\lambda;s,t)$  of the corresponding two-pole network equals  $D$.
Thus, we obtain the following statement.

\begin{theorem} (\cite{Gal57,Hof60})
\label{t-GH}
Problem $(G, d, \lambda)$  is solvable
if and only if it contains no deficient cut.
\qed
\end{theorem}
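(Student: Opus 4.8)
The plan is to deduce the theorem from the max-flow--min-cut theorem \cite{FF56}, applied to the two-pole network $(G',\lambda')$ constructed above, together with the capacity identity $\lambda'_{C'} = D + (\lambda_C - d_C)$ already recorded. The one ingredient still needed is a precise dictionary between feasible flows of $(G,d,\lambda)$ and maximum $s$-$t$ flows of value $D$ in $(G',\lambda')$; once that is in hand, everything else is pure translation.

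First I would set up the correspondence in both directions. Given a feasible flow $x$ for $(G,d,\lambda)$, extend it to $(G',\lambda')$ by putting $f_e = x_e$ on the original arcs, $f_{e_u} = d_u$ on each source arc, and $f_{e_w} = -d_w$ on each sink arc. The equation $A_G x = d$ read at a single vertex $v$ is exactly Kirchhoff conservation at $v$ in $G'$ once the saturated source or sink arc incident to $v$ is accounted for, so $f$ is an $s$-$t$ flow; its value is $\sum_{u \in V^+} d_u = D$. Conversely, any $s$-$t$ flow of value $D$ must saturate every arc out of $s$ and every arc into $t$, since the source cut $C_s$ has capacity exactly $D$; restricting such a flow to $E$ then yields, by the same conservation bookkeeping run backwards, a vector $x$ with $A_G x = d$ and $0 \le x \le \lambda$, i.e.\ a feasible flow. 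Hence $(G,d,\lambda)$ is solvable precisely when the maximum $s$-$t$ flow equals $D$.

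Next I would invoke max-flow--min-cut to rewrite this as: solvability holds iff the minimum cut capacity in $(G',\lambda')$ equals $D$. Using $c(G,d,\lambda;s,t) = \min(D,\ D + (\lambda_C - d_C) \mid C \in G)$ from above, the minimum equals $D$ iff $\lambda_C - d_C \ge 0$ for every cut $C$, that is, iff no cut satisfies $d_C > \lambda_C$. By the definition of deficiency this is exactly the absence of a deficient cut, which completes the argument.

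The routine steps are the capacity bookkeeping and the invocation of the classical theorem; the only place demanding care is the direction-sensitive conservation check, where the sign convention of the incidence matrix $A_G$ (tail $+1$, head $-1$) must be matched against the orientations of the added arcs $e_u = (s,u)$ and $e_w = (w,t)$ so that conservation in $G'$ reproduces $A_G x = d$ with the correct signs at producers, consumers, and transit nodes alike. Once that dictionary is pinned down, no genuine obstacle remains.
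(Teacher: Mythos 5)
Your proposal is correct and follows exactly the route the paper takes: reduction to the two-pole network $(G',\lambda')$, the capacity identity $\lambda'_{C'} = D + (\lambda_C - d_C)$, and the max-flow--min-cut theorem. In fact you spell out the flow correspondence (saturation of all source and sink arcs when the max flow equals $D$) in more detail than the paper, which states that equivalence without elaboration.
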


In other words, inequalities  $d_C \leq \lambda_C$,
or equivalently $r_C \leq 1$, must  hold for every cut $C$  in  $G$.
These theorem and inequalities are referred to as 
{\em the Gale-Hoffman criterion and inequalities}
for transshipment problems.

Note that  $d_C \leq \lambda_C$  holds automatically 
if  $d_C \leq 0$, since by ddefinition, $0 \leq \lambda_C$  for any cut  $C$.  

\smallskip

To verify solvability of  $(G, d, \lambda)$
we can apply any max-flow algorithm
to the corresponding two-pole network  $(G, d, \lambda; s,t) = (G', \lambda')$   and
compute its capacity  $c = c(G', \lambda')$  by  the algorithm.
if  $c = D$,  the  answer is positive and $(G, d, \lambda)$  is solvable;
otherwise, if  $c < D$, then the answer is negative and $(G, d, \lambda)$  is not solvable.
In the latter case the algorithm outputs
a deficient cut  $C^0$ (such  that  $d_{C^0} > \lambda_{C^0}$).
Moreover, $C^0$  maximizes  $d_C - \lambda_C$  among all cuts of $G$.
It is important to remark, however, that  $C^0$
may not maximize the ratio  $r_{C} = d_C / \lambda_C$.

Note also that the above algorithm is polynomial,
provided all parameters  $d$ and  $\lambda$  are rational.
This follows immediately from
the similar claim for the Ford-Fulkerson maxflow- mincut algorithm.

\section{Solving parametrical problem $(G, d, z \lambda)$
to construct a balanced flow}
\label{s2}
\subsection*{Verifying weak solvability}
Let us parametrize the capacities and consider
problem $(G, d, z \lambda)$, where factor  $z$
is a  positive rational number.

If there exists a fatal cut $C^0$  in  $(G, d, \lambda)$  then
$r_{C^0} = \infty$  and   $C^0$  will remain fatal for any $z$,
implying that $(G, d, z \lambda)$  is not even weakly solvable.

To verify the existence of a fatal cut
we apply the method suggested in the previous section.
Fix a large  $z$,  say $z = M = D / \min (\lambda_e  \mid e \in E)$.
Consider the two-pole network with
capacities  $M \lambda_e$  for all  $e  \in E$.
(Note that capacities
$\lambda_{e_u} = d_u$  for every  $u \in V^+$   and
$\lambda_{e_w} = - d_w$  for each  $w \in V^-$
do not depend on  $z$.)
A fatal flow exists in  $G$  if and only if
the  capacity of the obtained two-pole network
is strictly less than  $D$, which can be  checked by any max-flow algorithm.
From now on, assume that there is no fatal cut in  $(G, d, \lambda)$,
that is, $r_c  < + \infty$  for any cut   $C$  in  $G$.

\subsection*{Minmax ratio and flows}
Given a weakly feasible flow  $x$  in  $(G, d, \lambda)$
maximize the ratio  $r_e  =  x_e / \lambda_e$  for  $e \in E$
and then minimize these maxima over all weakly feasible flows

\begin{equation}
\label{e-r0}
r^0 = \min (\max (r_e) \mid e \in E) \mid
\mbox{ over all weakly feasible flows in} \; (G, d, \lambda)).
\end{equation}

Number  $r^0$   will be called
the {\em minmax ratio} and a weakly feasible flow realizing
minimum in (\ref{e-r0})
will be called a {\em minmax} flow.
In particular, the balanced flow  $x^*$  has this property.

The next claim follows immediately from Gale-Hoffman's Theorem.

\begin{corollary}
\label{c1}
Problem   $(G, d, z \lambda)$  is solvable if and only if  $z \geq r^0$.
\qed
\end{corollary}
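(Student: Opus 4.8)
The plan is to recognize that solvability of the scaled problem is nothing but the existence of a weakly feasible flow whose ratios are uniformly bounded by $z$, and then to read off the equivalence directly from the definition of the minmax ratio in \raf{e-r0}. The first thing I would note is that weak feasibility, i.e. the condition $A_G x = d$, depends only on $G$ and $d$ and not on the capacities; hence the set of weakly feasible flows is literally the same for $(G, d, \lambda)$ and for $(G, d, z\lambda)$. The factor $z$ affects only the feasibility constraint: since every $\lambda_e > 0$, a weakly feasible flow $x$ is feasible for $(G, d, z\lambda)$ exactly when $x_e \leq z\lambda_e$ for all $e \in E$, which is the same as $r_e = x_e/\lambda_e \leq z$ for all $e$, i.e. $\max(r_e \mid e \in E) \leq z$. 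Thus $(G, d, z\lambda)$ is solvable if and only if some weakly feasible flow has largest ratio at most $z$.

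From here both implications are immediate. If $(G, d, z\lambda)$ is solvable, then any solution $x$ is a weakly feasible flow with $\max(r_e \mid e \in E) \leq z$, so by the definition of the minmax ratio $r^0 \leq \max(r_e \mid e \in E) \leq z$. Conversely, if $z \geq r^0$, a minmax flow $x^*$ realizing the minimum in \raf{e-r0} satisfies $x^*_e \leq r^0 \lambda_e \leq z\lambda_e$ for every $e \in E$; since $x^*$ is weakly feasible and nonnegative, it is a feasible solution to $(G, d, z\lambda)$.

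The only point that is not purely formal is the tacit use, in the converse direction, of the fact that the minimum in \raf{e-r0} is genuinely attained by some minmax flow $x^*$, and this is exactly where Gale--Hoffman's Theorem~\ref{t-GH} enters. Applying that theorem to the scaled capacities $z\lambda$, whose cut capacities are $z\lambda_C$, problem $(G, d, z\lambda)$ is solvable if and only if $d_C \leq z\lambda_C$ for every cut $C$. Under the standing assumption that there is no fatal cut, this reads $z \geq r_C$ for all cuts (the cuts with $\lambda_C = 0$ satisfy $d_C \leq 0$ and impose no constraint), so the set of admissible $z$ is the closed ray $[\,\max(r_C \mid C \in G), \infty)$. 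Matching this closed ray against the set of those $z$ for which some weakly feasible flow has maximal ratio at most $z$ shows that the infimum defining $r^0$ is attained and, as a byproduct, that $r^0 = \max(r_C \mid C \in G)$. I expect this attainment (and the accompanying identification of $r^0$ with the critical cut ratio) to be the only substantive step; everything else is bookkeeping with the definitions.
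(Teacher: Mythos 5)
Your proof is correct and follows the same route the paper intends: the paper offers no written argument beyond the remark that the corollary ``follows immediately from Gale--Hoffman's Theorem,'' and your application of Theorem~\ref{t-GH} to the scaled capacities $z\lambda$ (turning the cut inequalities $d_C\leq z\lambda_C$ into the closed ray $z\geq \max(r_C\mid C \mbox{ in } G)$) is exactly that. You are in fact more careful than the paper in isolating the one nontrivial point --- that the minimum in \raf{e-r0} is attained, so that $z\geq r^0$ really suffices in the converse direction --- and the identity $r^0=\max(r_C\mid C \mbox{ in } G)$ you obtain as a byproduct is precisely the paper's Corollary~\ref{c2}.
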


To compute  $r^0$  in polynomial time,
consider networks  $(G, d, z \lambda)$  and
the corresponding two-pole networks  $G', z \lambda')$.
(Again,
$\lambda_{e_u} = d_u$  for each  $u \in V^+$   and
$\lambda_{e_w} = - d_w$  for every  $w \in V^-$, independently of  $z$.)
Apply the dichotomy to  $z \in [0; M]$
and a  max-flow algorithm
to determine the minimum  $z$  such that  problem  $(G, d, z \lambda)$  is solvable.
Obviously, this minimum  $z$  is exactly  $r^0$.
This procedure is polynomial provided vectors  $d$  and  $\lambda$
are rational. 

\subsection*{Critical cuts}
Recall definitions given by formulas of  (\ref{e-cut}).
A cut  $C^0$  maximizing the ratio  $r_C$  over all cuts of  $G$
is called {\em critical} for $(G, d, \lambda)$.

The next two statement related to flows, cuts, and ratios of
a problem $(G, d, \lambda)$  will be important.
The first one is an immediate consequence of (\ref{e-r0}) and Theorem \ref{t-GH}.

\begin{corollary}
\label{c2}
The ratio of a critical cut  $C^0$
and the minmax ratio  $r^0$  are equal:
$r_{C^0} = \max(r_C \mid  C$ in $G) = r^0$.
\qed
\end{corollary}

\begin{proposition}
\label{p2}
Every weakly feasible minmax flow  $x$
must load each critical cut  $C^0$  uniformly,  that is,
$x_e = r^0 \lambda_e$  must hold for every  $e \in E_{C^0}$.
\end{proposition}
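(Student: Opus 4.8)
The plan is to pin down the total flow $\sum_{e \in E_{C^0}} x_e$ crossing a critical cut from two opposite directions and show that the resulting bounds coincide, which forces every forward arc of $C^0$ to carry exactly $r^0 \lambda_e$. The two ingredients are a conservation identity across the cut (coming from weak feasibility) and the defining bound $r_e \le r^0$ of a minmax flow.

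First I would derive the conservation identity across the cut $C^0 = (V', V'')$. Summing the weak-feasibility equations $(A_G x)_v = d_v$ over all $v \in V'$, every arc with both endpoints in $V'$ contributes $+x_e$ at its tail and $-x_e$ at its head and thus cancels; a forward arc $e \in E_{C^0}$ (tail in $V'$, head in $V''$) contributes $+x_e$, while a backward arc (tail in $V''$, head in $V'$) contributes $-x_e$. Writing $E^{\leftarrow}$ for the set of backward arcs, this gives
\[
d_{C^0} \;=\; \sum_{v \in V'} d_v \;=\; \sum_{e \in E_{C^0}} x_e \;-\; \sum_{e \in E^{\leftarrow}} x_e .
\]
Since $x \ge 0$, the backward sum is nonnegative, so $\sum_{e \in E_{C^0}} x_e \ge d_{C^0}$.

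Next I would combine this with the minmax hypothesis. By Corollary~\ref{c2} we have $d_{C^0} = r^0 \lambda_{C^0} = r^0 \sum_{e \in E_{C^0}} \lambda_e$, so the inequality above reads $\sum_{e \in E_{C^0}} (x_e - r^0 \lambda_e) \ge 0$. On the other hand, a minmax flow realizes $\max_{e \in E} r_e = r^0$, so $x_e \le r^0 \lambda_e$ for every $e \in E$, and in particular $x_e - r^0 \lambda_e \le 0$ for each $e \in E_{C^0}$. A sum of nonpositive terms that is nonnegative must vanish termwise, hence $x_e = r^0 \lambda_e$ for all $e \in E_{C^0}$, as claimed.

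The only delicate point is the sign bookkeeping across the cut: one must carefully separate forward arcs $E_{C^0}$ from backward arcs $E^{\leftarrow}$ and invoke nonnegativity of the flow to obtain $\sum_{e \in E_{C^0}} x_e \ge d_{C^0}$ in the correct direction. (As a byproduct, equality also forces $\sum_{e \in E^{\leftarrow}} x_e = 0$, i.e.\ every backward arc carries no flow, though this is not needed for the statement.) Once that inequality is in place, the conclusion is a one-line squeeze between the lower bound $\sum_{e \in E_{C^0}} x_e \ge r^0 \lambda_{C^0}$ and the global upper bound $x_e \le r^0 \lambda_e$.
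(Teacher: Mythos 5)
Your proof is correct and follows essentially the same route as the paper's: conservation of flow across the cut pins the total flow through $E_{C^0}$ at (at least) $d_{C^0}=r^0\lambda_{C^0}$, while the minmax property caps each arc at $x_e\le r^0\lambda_e$, and the squeeze forces termwise equality. You are in fact slightly more careful than the paper, which asserts $x_{C^0}=d_{C^0}$ for every weakly feasible flow even though its $x_C$ sums only the forward arcs of the cut; your inequality $x_{C^0}\ge d_{C^0}$, obtained by accounting for the backward arcs, is what actually holds in general and is all the argument needs.
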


\proof
If  $x_e = r^0 \lambda_e$  for every  $e \in C^0$  then

$$x_{C^0} = \sum_{e \in E_{C^0}} x_e =
r^0 \sum_{e \in E_{C^0}} \lambda_e = r^0 \lambda_{C^0} = d_{C^0}.$$

\noindent
Equality  $x_C = d_C$  must hold for any weakly feasible flow and cut  $C$,
in particular, for a critical cut  $C^0$.
In the latter case,
if  $x_{e'} < r^0 \lambda_{e'}$  holds for some  $e' \in E_{C^0}$  then
$x_{e''} > r^0 \lambda_{e''}$  will hold for some other  $e'' \in E_{C^0}$,
and thus, $x$  is not a minmax flow.
This contradiction proves the claim.
\qed

\smallskip

To find a critical cut for  $(G, d, \lambda)$,
consider problem  $(G, d, r^0 \lambda)$.
By Corollary \ref{c1}, for this problem we have:
$r_C \leq 1$  for every cut  $C$  and
there is a cut  $C^0$  with   $r_{C^0} = 1$.
There  may exist several such cuts  and
each of them is critical, that is,
its ratio will exceed  $1$
whenever we replace factor  $r^0$  by  $r^0 - \epsilon$,
with an arbitrarily small positive $\epsilon$.

Consider problem  $(G, d, z \lambda)$  with
$z = r^0 - 1/M$ and apply a max-flow algorithm
to the corresponding two-pole problem.
We obtain a cut  $C^0$  that is critical for $(G, d, z \lambda)$
and for  $(G, d, \lambda)$,  too.

\subsection*{Constructing a (unique) balanced flow}
Given, a problem $(G, d, \lambda)$,
find in polynomial time  $r^0$  and a critical cut  $C^0$.
(Note that  $r^0$  is well-defined, but there  may be several critical cuts
whose edge-sets differ.)
By Corollary \ref{c2}, the ratios are equal: $r^0 = r_{C^0}$.
Each weakly feasible  minmax ratio $x$  must load  $C^0$  uniformly,
that is,  $x_e = r^0 \lambda_e$  must hold for every  $e \in C^0$.
Then, equality  $x_{C^0} = d_{C^0}$  holds.

Since we know exactly the values  of  $x_e$  for each $e \in E_{C^0}$,
we can eliminate  $E_{C^0}$  from  $E$  and recompute vector  $d$  accordingly.
Thus, we replace the original problem  $(G, d, \lambda) = (G^0, d^0, \lambda^0)$
by a new one  $(G^1, d^1, \lambda^1).$

\begin{proposition}
\label{p2}
Inequality  $r^1 \leq r^0$  holds.
Moreover, the equality  $r^1 = r^0$  holds whenever
the critical cut  $C^0$  was not unique
and  $E_{C^{0'}} \setminus  E_{C^0} \neq \emptyset$
for some other critical cut  $C^{0'}$  in  $(G, d, \lambda)$.
\end{proposition}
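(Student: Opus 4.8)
The plan is to derive $r^1\le r^0$ from the Gale--Hoffman solvability criterion by restricting a feasible flow of the original problem, and to derive the equality case from a second critical cut that survives the deletion of $E_{C^0}$. For the inequality I would start from Corollary \ref{c1}: since $r^0\ge r^0$, the scaled problem $(G,d,r^0\lambda)$ is solvable and admits a feasible flow $y$ with $0\le y_e\le r^0\lambda_e$ for all $e$. Its maximum ratio is at most $r^0$, and since no weakly feasible flow can beat the minmax value $r^0$, it is exactly $r^0$; hence $y$ is a minmax flow and, by the uniform-loading property of critical cuts established above, $y_e=r^0\lambda_e$ on every arc of $E_{C^0}$ --- exactly the values fixed before deleting $E_{C^0}$. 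Therefore the restriction $y^1$ of $y$ to $E\setminus E_{C^0}$ satisfies $A_{G^1}y^1=d^1$ (because $d^1$ is $d$ corrected precisely by the net contribution of those fixed values) and obeys $y^1_e\le r^0\lambda_e$ on each surviving arc. So $y^1$ is feasible for $(G^1,d^1,r^0\lambda^1)$, and Corollary \ref{c1} for the reduced problem gives $r^1\le r^0$.

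It is worth noting why I would not attempt the inequality cut-by-cut. For a cut $C=(V',V'')$ of $G$ and the corresponding cut $C^1$ of $G^1$ (the same partition, with arc set $E_C\setminus E_{C^0}$), a routine count using the incidence matrix gives
\[
d^1_{C^1}-r^0\lambda^1_{C^1}=(d_C-r^0\lambda_C)+r^0\!\!\sum_{\substack{e\in E_{C^0}\\ e\colon V''\to V'}}\!\!\lambda_e .
\]
The first term is $\le 0$ since $r_C\le r^0$, but the backward-arc term is $\ge 0$, so this identity by itself does not bound $d^1_{C^1}$ by $r^0\lambda^1_{C^1}$. This is exactly the place where Boros's remark helps: the global Gale--Hoffman/max-flow argument of the previous paragraph sidesteps the delicate sign of the backward term.

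For the equality case I would feed the second critical cut into this same identity. Let $C^{0'}=(V',V'')$ be critical with $E_{C^{0'}}\setminus E_{C^0}\neq\emptyset$, and let $C^1$ be the corresponding cut of $G^1$. Criticality gives $d_{C^{0'}}-r^0\lambda_{C^{0'}}=0$, so the identity collapses to $d^1_{C^1}-r^0\lambda^1_{C^1}=r^0\sum_{e}\lambda_e\ge 0$, the sum taken over arcs $e\in E_{C^0}$ directed from $V''$ to $V'$. The hypothesis $E_{C^{0'}}\setminus E_{C^0}\neq\emptyset$ guarantees $\lambda^1_{C^1}>0$, so $C^1$ is a bona fide cut of $G^1$ with $r_{C^1}=d^1_{C^1}/\lambda^1_{C^1}\ge r^0$. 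Applying Corollary \ref{c2} to $(G^1,d^1,\lambda^1)$, the minmax ratio $r^1$ equals the largest cut ratio, so $r^1\ge r_{C^1}\ge r^0$; together with $r^1\le r^0$ this forces $r^1=r^0$.

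The main obstacle I expect is the first part: proving $r^1\le r^0$ hinges on choosing the feasible flow $y$ so that it agrees on $E_{C^0}$ with the fixed values $r^0\lambda_e$, which is precisely what the uniform-loading proposition supplies and what makes $y^1$ simultaneously weakly feasible for $d^1$ and within the scaled capacities. (Throughout I assume $r^0>0$; the degenerate case $r^0=0$ forces $d=0$ and is trivial.)
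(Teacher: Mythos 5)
Your argument is correct, and it splits into one part that mirrors the paper and one that genuinely departs from it. For the inequality $r^1\le r^0$, you and the paper use the same machinery: the paper argues by contradiction (if $r^1>r^0$, a critical cut $C^1$ of $G^1$ would force some edge of the restricted minmax flow to carry ratio $\ge r^1>r^0$), which is just the contrapositive of your direct chain ``$(G,d,r^0\lambda)$ solvable $\Rightarrow$ minmax flow loads $C^0$ uniformly $\Rightarrow$ its restriction is feasible for $(G^1,d^1,r^0\lambda^1)$ $\Rightarrow$ $r^1\le r^0$ by Corollary~\ref{c1}.'' Your version has the merit of making explicit the step the paper leaves implicit, namely that the restriction is weakly feasible for $d^1$ precisely because the minmax flow agrees with the fixed values $r^0\lambda_e$ on $E_{C^0}$. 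For the equality case the routes diverge: the paper argues by contradiction from Proposition~\ref{p2} (uniform loading) applied to the second critical cut $C^{0'}$ --- a surviving edge $e_0\in E_{C^{0'}}\setminus E_{C^0}$ would have to carry ratio $r^0$ in every minmax flow, which is impossible if $r^1<r^0$ --- whereas you compute the identity $d^1_{C^1}-r^0\lambda^1_{C^1}=(d_{C^{0'}}-r^0\lambda_{C^{0'}})+r^0\sum\lambda_e\ge 0$ over the backward arcs of $E_{C^0}$, exhibit $C^1$ as a cut of $G^1$ of ratio $\ge r^0$, and invoke Corollary~\ref{c2}. Your identity is correct (I verified the incidence count, including that $E_{C^{0'}}\setminus E_{C^0}\neq\emptyset$ gives $\lambda^1_{C^1}>0$), and it is somewhat in the spirit of the ``direct algebraic computation'' of \cite{GG84} that the paper deliberately avoids; on the other hand it yields slightly more information (a lower bound on $r_{C^1}$ for \emph{every} cut inherited from a critical one, quantified by the backward capacity), and your side remark explaining why the same identity cannot prove the inequality $r^1\le r^0$ cut-by-cut is a worthwhile observation that the paper does not make.
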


In \cite{GG84} this statement was proven
for symmetric digraphs by arithmetical computations.
In fact, only a sketch of a proof was given,
because the volume of the  paper was strictly limited
by the rules of the journal.
However, such computations are not needed, since both statements
of Proposition \ref{p2} follow from the Gale-Shapley Theorem
and its corollaries given above.

\proof
Suppose that $r^1 > r^0$.
Then for  a  critical cut  $C^1$  in $(G^1, d^1, \lambda^1)$
we have  $r_{C^1} =  r^1 > r^0$.
Hence, for  a weakly feasible minmax flow  $x$
in $(G, d, \lambda)$
there is an edge  $e \in E_{C^1}$  such
$x_e / \lambda_e \geq r_1 > r_0$, which contradicts
the definition of  $r_0$.

Suppose that  $r^1 < r^0$, while
there exists an edge
$e_0 \in E_{C^{0'}} \setminus  E_{C^0}$.
Then, on the first iteration we could choose
the critical cut  $C^{0'}$  rather than  $C^0$
and, as we know, equality $x_{e_0} = r^0 \lambda_{e_0}$
must hold for every weakly feasible minmax ratio  $x$,
in particular, for each  balanced flow.
Yet, this is not possible if   $r^1 < r^0$, which is a contradiction.
\qed

\smallskip

This proposition implies that
in fact we must fix  $x_e = r^0 \lambda_e$
for every edge  $e$  that belongs to a critical cut.
Yet, the result will be the same if we consider
all these cuts together or one by one.
For example, after eliminating  $E_{C^0}$,
we can repeat the procedure with the same factor  $z = r^0 - 1/M$
searching for another critical cut  $C^{0'}$
with the same ratio  $r_{C^{0'}} = r_{C^0} = r_0$.
In there is one, we fix
$x^*_e = r^0 \lambda_e$  for all $e \in E_{C^{0'}}$  and repeat.
If there is no one  then  $r*1 < r^0$
and we search for  $r^1$  applying the dichotomy to the reduced problem.
In particular, these arguments imply uniqueness of the balanced flow.

\begin{corollary}
Each  problem  $(G, d, \lambda)$  has a unique lexmin solution.
\qed
\end{corollary}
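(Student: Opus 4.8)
The plan is to prove uniqueness \emph{constructively}, independently of the abstract convexity argument of Section \ref{s0}, by showing that the iterative algorithm of the previous section leaves no freedom whatsoever: at every iteration the arc-values it fixes are forced for \emph{every} balanced flow, so two balanced flows cannot differ. I would argue by induction on $|E|$, the number of arcs, with the reduced problem $(G^1,d^1,\lambda^1)$ furnishing the induction step since it has strictly fewer arcs; the base case $E=\emptyset$ is vacuous. The engine of the induction is the combination of three facts already established: Corollary \ref{c2} (so that $r^0=r_{C^0}=\max_C r_C$), the uniform-loading proposition (every weakly feasible minmax flow satisfies $x_e=r^0\lambda_e$ on each critical cut), and the monotonicity statement $r^1\le r^0$ together with its equality clause, i.e. Proposition \ref{p2}.

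First I would isolate the initial ``phase'' of the algorithm, in which all arcs belonging to \emph{some} critical cut of $(G,d,\lambda)$ are assigned the value $r^0\lambda_e$. Let $F_0\subseteq E$ denote the union of the edge-sets $E_C$ over all critical cuts $C$. Applying the uniform-loading proposition to each critical cut in turn, every weakly feasible minmax flow, in particular every balanced flow $y$, must satisfy $y_e=r^0\lambda_e$ for all $e\in F_0$; this is exactly what the algorithm fixes in its first phase, so $y$ and $x^*$ agree on $F_0$. The equality clause of Proposition \ref{p2} guarantees that this phase terminates precisely when $F_0$ is exhausted and that the minmax ratio of the remaining problem then drops \emph{strictly} below $r^0$. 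Consequently the largest ratio of $x^*$ equals $r^0$, attained on exactly the arcs of $F_0$, and no balanced flow can place fewer than $|F_0|$ arcs at that top level, nor, being lexmin, more.

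Next I would pass to the reduced problem. Deleting $F_0$ and recomputing $d^1$ turns the weakly feasible flows that agree with $x^*$ on $F_0$ into exactly the weakly feasible flows of $(G^1,d^1,\lambda^1)$. Since the top block of $r(y)$ is forced (all of $F_0$ at level $r^0$, with nothing above), the lexicographic comparison between two such flows is decided entirely by their restrictions to $E\setminus F_0$; hence the restriction of any balanced flow $y$ is itself a balanced (lexmin) flow for the reduced problem. By the induction hypothesis the reduced problem has a unique balanced flow, which is the restriction of $x^*$, and combined with agreement on $F_0$ this yields $y=x^*$. Finally $x^*$ is itself weakly feasible by construction and, by monotonicity, realizes the lexmin ratio vector $r^*$, and it is feasible whenever a feasible flow exists (then $r^0\le 1$); thus $x^*$ is \emph{the} lexmin solution.

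The main obstacle I anticipate is the reduction step, that is, verifying that restricting a balanced flow yields a balanced flow for the smaller problem. Two things must be made airtight: that the top block of the ratio vector is the \emph{same} set $F_0$ for every balanced flow, so that the comparison genuinely factors through the deletion, and that the preorder $\prec$ of Section \ref{s0}, restricted to flows sharing this top block, coincides with the preorder of the reduced problem. The first point rests on the uniform-loading proposition together with the strict drop $r^1<r^0$ from the equality clause of Proposition \ref{p2}; the second is a bookkeeping check on the level-set sizes $L(z,\cdot)$, splitting off the constant contribution $|F_0|$ of the top block for all thresholds $z\le r^0$ and observing that this contribution is identical for the two flows being compared.
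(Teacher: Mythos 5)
Your proposal is correct and follows essentially the same route as the paper: the corollary there is obtained from exactly this iterative forcing argument (values on all critical-cut arcs are forced to $r^0\lambda_e$ by the uniform-loading proposition, then one deletes those arcs and recurses), with the paper leaving the induction and the level-set bookkeeping implicit where you spell them out. Your added care about the reduction step --- that the preorder $\prec$ factors through deletion of the top block $F_0$ --- is a genuine tightening of the paper's informal "these arguments imply uniqueness," not a different method.
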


By definition, the balanced flow  $x^*$  remains
a weakly feasible minmax flow on every iteration.
Thus, we can repeat  the  above procedure and
in at most  $|E|$  iterations
we will determine a flow  $x^*_e$  on every edge  $e  \in E$.
If  vectors  $d$ and $\lambda$  are  rational then  
every iteration is known to be polynomial
and, hence, the whole procedure is polynomial too.

\section{Related works}
\subsection*{Symmetric digraphs}
A digraph  $G = (V, E)$  with capacities $\lambda$  is called {\em symmetric}
if to  each edge  $e' = (u,w) \in E$  of capacity  $\lambda_{e'}$,
a unique oppositely directed edge
$e'' = (w,u) \in E$  of the same capacity, $\lambda_{e''} = \lambda_{e'}$, is assigned.
A symmetric digraph can be replaced by a non-directed graph.
In this case the balanced flow was defined and constructed
by an iterative polynomial algorithm in \cite{GG84}.
Note, however, that in that  paper
the key inequality  $r^0 \geq r^1$  was not derived
from the Gale-Hoffman criterion.
Instead it was proved, by a  pretty long algebraic computation,
which was only sketched in the paper, 
since its size was limited in accordance with the rules of the journal.

\subsection*{Balanced flows in networks with monomial conductance}
In \cite{Gur10,Gur20,GG84,GG87,GG92,OGG86,OGG86a}
networks with ``boundary conditions" $A_G \; x = d^0$ 
and monomial conductance  $x_e = \lambda_e y_e^\tau$  were considered.
Here   $y_e$  is the voltage or pressure drop on
a directed edge   $e  \in E$, while
$\tau$  is a positive real parameter common for all  $e \in E$.
For example, $\tau = 1$   and  $\tau = 1/2$
correspond respectively to the classic Ohm law and
square conductance law, which is typical in hydraulics and gas dynamics.
Let us remark that in \cite{Gur10,GG84,GG87}
only the case of symmetric digraphs
(that is, non-directed graphs) and
in \cite{Gur10,GG87} only the two-pole networks
(that is, the first Kirchhoff law, $d_v = 0$,
was required for all notes except two) were considered.
In \cite{GG84,GG87} it was shown
that the flow tends to the balanced one,
as  $\tau \rightarrow 0$; see \cite{Gur10} for  more details
and \cite{Gur20} for generalization to the directed case.

\section*{Acknowledgements}
The author is thankful to Endre Boros who suggested
to derive monotonicity of the minmax ratios,
$r_{C^0} \geq r_{C^1} \geq \ldots$,
from the Gale-Hoffman inequalities, rather that
to verify it by direct algebraic transformations,
as it was  done in \cite{GG84} in case of symmetric digraphs. 
He also suggested an alternative approach to uniqueness given in Section \ref{s0}.
\newline
This research was done
within the framework of the HSE University Basic Research Program
and supported by the RSF grant  20-11-20203.


\end{document}